\numberwithin{equation}{section}
\newtheorem{theorem}{Theorem}[section]
\newtheorem{lemma}{Lemma}[section]
\newcommand{\Rnn}{{\mathbb{R}}^{n\times n}}
\newcommand{\Rrn}{{\mathbb{R}}^{r\times n}}
\newcommand{\Rnr}{{\mathbb{R}}^{n\times r}}
\begin{document}

\title{A modified large-scale structure-preserving doubling algorithm for a large-scale Riccati equation from transport theory
\thanks{This research is supported by the National Natural Science Foundation under grant 11301013.} }

\author{Pei-Chang Guo \thanks{Corresponding author, e-mail: gpeichang@126.com} \\
School of Mathematical Sciences, Peking University, Beijing 100871, Beijing, China}

\date{}

\maketitle
\begin{abstract}
We consider the large scale nonsymmetric algebraic Riccati equation arising in
transport theory, where the $n\times n$ coefficient matrices $B, C$ are symmetric and low-ranked
and $A, E$ are rank one updates of nonsingular diagonal matrices. By introducing a
balancing strategy and setting appropriate initial matrices carefully, we can simplify the large-scale structure-preserving doubling algorithm (SDA\_ls) for this
special equation. We give modified large-scale structure-preserving doubling algorithm, which can
reduce the flop operations of original SDA\_ls by half. Numerical experiments illustrate the effectiveness of our method.

\vspace{2mm} \noindent \textbf{Keywords}: large-scale nonsymmetric algebraic
Riccati equation, large-scale structure-preserving doubling algorithm, balancing
strategy, appropriate initial matrices, transport theory.

\end{abstract}

\section{Introduction}
In nuclear physics, we need to solve the  nonsymmetric
algebraic Riccati equation (NARE):
\begin{equation}\label{lsriccati}
XCX-XE-AX+B=0
\end{equation}
and its dual equation
\begin{equation}\label{dualequation}
YBY-YA-EY+C=0 ,
\end{equation}
 where $A,B,C,E \in \Rnn$ are matrices given by
\begin{equation}\label{lscoematrix1}
A=\Delta-eq^T,B=ee^T,C=qq^T,E=D-qe^T,
\end{equation}
with
\begin{eqnarray}\label{lscoematrix2}
\left\{\begin{array}{ccll}
e&=&(1,1,\ldots,1)^T, \\
q &=& (q_1,q_2,\ldots,q_n)^T, \quad \quad \,\,q_i=\frac{c_i}{2\omega_i},\\
\Delta &=& \mbox{diag}(\delta_1,\delta_2,\ldots,\delta_n),
\quad \, \delta_i=\frac{1}{c\omega_i(1+\alpha)},\\
D &= &\mbox{diag}(d_1,d_2,\ldots,d_n), \quad
d_i=\frac{1}{c\omega_i(1-\alpha)},
\end{array}\right.
\end{eqnarray}
and $0<c\leq1$, $0\leq\alpha<1$,
$0<\omega_{n}<\cdots<\omega_{2}<\omega_{1}$,
$\sum_{i=1}^{n}c_{i}=1$, $c_{i}>0$, $i=1,2,\cdots,n$.

These matrices and vectors rely on the parameters $c$, $\alpha$,
$\omega_{i}$ and $c_{i}$. For the further physical meaning of these
parameters, please see \cite{jua95} and the references
therein. In this paper, NARE (\ref{lsriccati}) is always
referred to  the nonsymmetric algebraic Riccati equation
(\ref{lsriccati}) associated with the special structure given by
\eqref{lscoematrix1} and (\ref{lscoematrix2}). In applications from
transport theory, the minimal nonnegative solution of NARE \eqref{lsriccati} is of interest.
The numerical algorithms about this minimal nonnegative solution have been
studied by many authors in the past decade, and various direct and iterative
methods have been proposed. The minimal nonnegative solution is
associated with the matrix
\begin{eqnarray*}
H=\left[\begin{array}{lc}
E & -C \\
B & -A
\end{array}\right].
\end{eqnarray*}
The solution of NARE \eqref{lsriccati} can be expressed in
closed form in terms of the eigenvalues and eigenvectors of $H$; see
\cite{jua98} and \cite{xhg08}. Available iteration algorithms are
the Newton's method \cite{binew,guo01,guo00,lunew,gxx05}, the
fixed-point iteration method \cite{bzz08,bzz,guo10,jua98,lu05} and
the structure-preserving doubling algorithm \cite{shift,SDA,bisda,lssda}.

In \cite{SDA} the authors introduced a structure-preserving
transformation and developed a structure-preserving doubling
algorithm (SDA) approximating quadratically to the minimal
nonnegative solution of NARE(\ref{lsriccati}) and its dual
equation. For medium size problems without the special structure SDA is very effective. In \cite{lssda}, the authors propose a large-scale structure-preserving doubling algorithm (SDA\_ls) based on the sparse-plus-low-rank coefficient matrix structure, which has $O(n)$ computational complexity  and memory requirement per iteration and could be applied to large-scale NARE satisfying the sparse-like structure.

In this paper, we further utilize the special structure of coefficient matrices of NARE \eqref{lsriccati} and improve the large-scale structure-preserving doubling algorithm. There are four matrix sequences $\{E_k\},\{F_k\},\{H_k\}$ and $\{G_k\}$ when using SDA. After an appropriate balancing of the coefficient matrices , we design appropriate initial matrices of SDA\_ls, which can reduce the flop operations of SDA\_ls by half. Our modified SDA\_ls maintains the same quadratic convergence rate as original SDA\_ls and is more efficient on large-scale NARE \eqref{lsriccati}. The main contribution of this paper is that we show how to reduce the flop operations of SDA\_ls by half through introducing a balancing strategy and setting the appropriate initial matrices for SDA\_ls. We prove our result theoretically.

The rest of this paper is organized as follows. In section 2 some preliminaries
are presented. In section 3 we give the balancing strategy and the appropriate initial matrices for SDA\_ls, showing how to reduce the flop operations by half at
each iteration step. In section 4 we present some
numerical examples, which show that our modified SDA\_ls is much faster
than original SDA\_ls applied to NARE \eqref{lsriccati}. Throughout the paper, the Hadamard
product of A and B is defined by $A\circ B = (a_{ij}b_{ij})$, $I$ is used to denote the identity matrix of appropriate dimension, $\oplus$ is used to denote the direct sum of square matrices.

\section{Preliminaries}

\subsection{Structure-Preserving Doubling Algorithm}

The structure-preserving doubling algorithm \cite{SDA} is
quadratically convergent to computing the minimal positive solution of  NARE(\ref{lsriccati}). The algorithm can be described as
follows.

\vspace{2mm}

Choose $\gamma\geq$ max $\{e_{ii}, a_{ii}: i=1, \cdots, n \}$; let
\[
W=A+\gamma I-B(E+\gamma I)^{-1}C,   V=E+\gamma I-C(A+\gamma
I)^{-1}B,
\]
and
\begin{eqnarray}\label{lsinitial-matrix}
\left.\begin{array}{cc}
E_0=I-2\gamma V^{-1},  & F_0=I-2\gamma W^{-1}, \\
G_0=2\gamma (E+\gamma I)^{-1}CW^{-1}, &  H_0=2\gamma
W^{-1}B(E+\gamma I)^{-1}.
\end{array}\right.
\end{eqnarray}
where $A, B, C, E$ are coefficient matrices of
(\ref{lsriccati}) and $e_{ii}$ and $a_{ii}$ are the $i-th$ diagonal elements of the
matrices $E$ and $A$, respectively.

For $k\geq0$, calculate
\begin{eqnarray}\label{lsiteration-matrix}
\left.\begin{array}{ccc}
E_{k+1}&=&E_k(I-G_kH_k)^{-1}E_k, \\
F_{k+1}&=&F_k(I-H_kG_k)^{-1}F_k, \\
G_{k+1}&=&G_k+E_k(I-G_kH_k)^{-1}G_kF_k, \\
H_{k+1}&=&H_k+F_k(I-H_kG_k)^{-1}H_kE_k.
\end{array}\right.
\end{eqnarray}
Let
\begin{eqnarray*}
K=\left[\begin{array}{cc}
E & -C \\
-B & A
\end{array}\right]
\end{eqnarray*}
be a nonsingular $M$-matrix. If
$\lambda_1,\lambda_2,\ldots,\lambda_{2n}$ are the eigenvalues of $H$
ordered by nonincreasing real parts, then the eigenvalues of $K$ are
$\lambda_1,\ldots,\lambda_n,-\lambda_{n+1},\ldots,-\lambda_{2n}$\cite{guo01}.

From \cite{shift,Iann}, We have the following convergence result.
\begin{lemma}
If the matrix sequences $\{E_k\},\{F_k\},\{H_k\}$ and $\{G_k\}$ are
generated by (\ref{lsiteration-matrix}), and $K$ is nonsingular, then
 $\lim_{k \to \infty}E_k = \lim_{k \to \infty}F_k = 0$, $\lim_{k
\to \infty}G_k = Y$, and $\lim_{k \to \infty}H_k = X$ with quadratic
convergence rate, where $X$ and $Y$ are the minimal nonnegative
solutions of NARE (\ref{lsriccati}) and its dual equation
(\ref{dualequation}), respectively.
\end{lemma}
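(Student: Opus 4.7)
My plan is to reconstruct the classical SDA convergence proof from \cite{shift,Iann} by embedding the iteration into a structured matrix pencil and tracking how each doubling step squares the spectrum. First, I would connect the initial matrices (\ref{lsinitial-matrix}) to a Cayley-transformed pencil built from $H$. The minimal nonnegative solution $X$ corresponds to the invariant subspace of $H$ spanned by eigenvectors whose eigenvalues have negative real part; the Cayley transformation with parameter $\gamma \geq \max\{e_{ii}, a_{ii}\}$ sends those eigenvalues into the open unit disk and the remaining ones outside the closed unit disk. The hypothesis that $K$ is nonsingular excludes eigenvalues of $H$ on the imaginary axis, so the split is clean. A direct computation shows that the blocks $E_0, F_0, G_0, H_0$ defined in (\ref{lsinitial-matrix}) are precisely the entries of the resulting pencil written in the canonical SDA form
\[
\mathcal{M}_0 = \begin{pmatrix} E_0 & 0 \\ -H_0 & I \end{pmatrix}, \qquad \mathcal{L}_0 = \begin{pmatrix} I & -G_0 \\ 0 & F_0 \end{pmatrix}.
\]

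Second, I would verify that the update (\ref{lsiteration-matrix}) is structure preserving, i.e.\ that there exist $\mathcal{M}_{k+1}, \mathcal{L}_{k+1}$ of the same block form satisfying a swapping identity of the type $\mathcal{M}_{k+1} \mathcal{L}_k = \mathcal{L}_{k+1} \mathcal{M}_k$. A block computation using the Sherman--Morrison--Woodbury identity applied to $(I - G_k H_k)^{-1}$ and $(I - H_k G_k)^{-1}$ reproduces exactly the four formulas for $E_{k+1}, F_{k+1}, G_{k+1}, H_{k+1}$; invertibility of these inner factors is maintained inductively because $\rho(G_k H_k) < 1$ is preserved by the iteration. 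Consequently, the pencil at step $k$ has spectrum $\{\mu_i^{2^k}\}$, with $\mu_i$ the Cayley-transformed eigenvalues of $H$. Since $|\mu_i| < 1$ for half of the indices, the blocks $E_k$ and $F_k$ (which encode those eigenvalues) decay as $O(\rho^{2^k})$ with $\rho = \max_{|\mu_i|<1} |\mu_i|$, giving the claimed quadratic rate, while $G_k$ and $H_k$ converge to matrices whose columns span the stable deflating subspace.

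The main obstacle I expect is the final step: identifying these limits with the \emph{minimal} nonnegative solutions $X, Y$ rather than arbitrary representatives of the deflating subspace. This requires exploiting the M-matrix structure of $K$ to show that the spectral split selected by the Cayley transformation matches precisely the invariant subspace associated with the minimal nonnegative solution, invoking the nonnegativity and comparison theory developed for NAREs of transport type (as in \cite{guo01}). Everything else reduces to careful bookkeeping around Sherman--Morrison--Woodbury, but this last identification is where the problem-specific structure must be used.
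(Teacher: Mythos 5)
The paper does not actually prove this lemma: it is imported verbatim from the cited literature (``From \cite{shift,Iann}, We have the following convergence result''), so there is no internal proof to compare against. Your sketch is, in substance, the standard argument from \cite{SDA,shift,Iann}: Cayley transform of $H$ into a pencil in the first standard structured form, doubling so that the spectrum is raised to the power $2^k$ at step $k$, decay of $E_k,F_k$ at rate $O(\rho^{2^k})$, and identification of the limits of $H_k,G_k$ with the minimal nonnegative solutions via the M-matrix theory of \cite{guo01}. Two points deserve sharpening. First, the swapping identity in the doubling step is $\widetilde{\mathcal{M}}\mathcal{L}_k=\widetilde{\mathcal{L}}\mathcal{M}_k$ for an \emph{auxiliary} pair $(\widetilde{\mathcal{M}},\widetilde{\mathcal{L}})$, with $\mathcal{M}_{k+1}=\widetilde{\mathcal{M}}\mathcal{M}_k$ and $\mathcal{L}_{k+1}=\widetilde{\mathcal{L}}\mathcal{L}_k$; as literally written, $\mathcal{M}_{k+1}\mathcal{L}_k=\mathcal{L}_{k+1}\mathcal{M}_k$ would not square the spectrum. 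Second, the well-definedness of the iteration ($I-G_kH_k$ and $I-H_kG_k$ nonsingular) is not a consequence of the spectral picture alone but is established in the references by the monotonicity bounds $0\le H_k\le X$, $0\le G_k\le Y$ together with $\rho(XY)<1$ for a nonsingular $K$; this nonnegativity machinery is therefore needed \emph{before} the spectral decay argument, not only in the final identification step that you flag as the main obstacle. With those repairs your outline coincides with the proof in the cited sources.
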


\subsection{Large-scale SDA}
To develop a large-scale SDA \cite{lssda}, The authors assume that $E$, $A$ are sparselike (with the products $A^{-1}u, A^{-T}u,  E^{-1}u, E^{-T}u$ computable in $O(n)$ complexity where $u$ is a vector) and $B,C\in \Rnn$ have the full low-ranked decompositions
\begin{equation}\label{lsassump}
    B=B_1B_2^T, C=C_1C_2^T,
\end{equation}
where $B_1 ,C_1\in \Rnr$ and $B_2 ,C_2\in \Rrn$ with $r\ll n$. The authors apply the Sherman-Morrison-Woodbury formula to avoid the invention of large matrices and use low ranked matrices to represent iterates. The basic large-scale SDA (SDA\_ls) is as follows.

For $k=1,2,\cdots,$ SDA\_ls is organized so that the iterates have the recursive forms
\begin{eqnarray}\label{lssdait}
% \nonumber to remove numbering (before each equation)
 \nonumber H_k^{\tau} = Q_{1k}^{\tau} \Sigma_k^{\tau} Q_{2k}^{\tau T}, &  G_k^{\tau} = P_{1k}^{\tau} \Gamma_k^{\tau} P_{2k}^{\tau T},\\
  E_k^{\tau} = E_{k-1}^{\tau 2}+E_{1k}^{\tau}E_{2k}^{\tau T},&  F_k^{\tau} = F_{k-1}^{\tau 2}+F_{1k}^{\tau}F_{2k}^{\tau T},
\end{eqnarray}
where $ Q_{ik}^{\tau}$ and $ P_{ik}^{\tau}$ are column orthogonal with widths being $m_k$ and $l_k$ $(i=1,2)$ respectively. Note that $ E_k^{\tau}$ and $ F_k^{\tau}$ are not formed explicitly.

For the initial matrices \eqref{lsinitial-matrix}, we have
\begin{equation*}
    E_0=I-2\gamma V^{-1}, F_0=I-2\gamma W^{-1}, H_0=Q_{10}\Sigma_0 Q_{20}^T, G_0=P_{10}\Gamma_0P_{20}^T,
\end{equation*}
where
\begin{eqnarray}\label{lsinitial}
% \nonumber to remove numbering (before each equation)
  Q_{10} \equiv  2 \gamma W^{-1}B_1,& Q_{20} \equiv (E+\gamma I)^{-T}B_2,& \Sigma_0\equiv I,\\
  P_{10} \equiv  2 \gamma (E+\gamma I)^{-1}C_1,& P_{20} \equiv W^{-T}C_2,& \Gamma_0\equiv I.
\end{eqnarray}
Note that efficient linear solvers with $O(n)$ complexity for large scale $A, E$ and $A+\gamma I, E+\gamma I$ are available. Utilizing Shermann-Morrison-Woodbury (SMW) formula $W^{-1}u$ and $V^{-1}u$ can be computed with $O(n)$ complexity too.
Then we compute the economic QR decompositions
\begin{equation}\label{lsqrini}
    Q_{10}=\hat{Q}_{10} R_{1q}, Q_{20}=\hat{Q}_{20} R_{2q}, P_{10}=\hat{P}_{10} R_{1p}, P_{20}=\hat{P}_{20} R_{2p},
\end{equation}
where $\hat{Q}_{10}, \hat{Q}_{20}, \hat{P}_{10}, \hat{P}_{20}$ are column orthogonal and $R_{1q}, R_{2q}, R_{1p}, R_{2p}$  are upper triangular matrices with positive diagonal entries. Then we compute the SVDs
\begin{eqnarray}\label{lssvdini}
% \nonumber to remove numbering (before each equation)
\nonumber  R_{1q}\Sigma_0 R_{2q}^T&=&[U_{10}^{\tau}, U_{10}^{\epsilon}](\Sigma_0^{\tau}\oplus \Sigma_0^{\epsilon})[U_{20}^{\tau}, U_{20}^{\epsilon}]^T ,\|\Sigma_0^{\epsilon}\|<\epsilon_0,\\
   R_{1p}\Gamma_0 R_{2p}^T&=&[V_{10}^{\tau}, V_{10}^{\epsilon}](\Gamma_0^{\tau}\oplus \Gamma_0^{\epsilon})[V_{20}^{\tau}, V_{20}^{\epsilon}]^T ,\|\Gamma_0^{\epsilon}\|<\epsilon_0,
\end{eqnarray}
where
$[U_{i0}^{\tau}, U_{i0}^{\epsilon}]$ and $[V_{20}^{\tau}, V_{20}^{\epsilon}]$ $(i=1,2)$ are orthogonal, $\Sigma_0^{\tau}\oplus \Sigma_0^{\epsilon}$ and $\Gamma_0^{\tau}\oplus \Gamma_0^{\epsilon}
$ are nonnegative diagonal.
By setting
\begin{equation}\label{lsinit}
    Q_{10}^{\tau}=\hat{Q}_{10}U_{10}^{\tau},  Q_{20}^{\tau}=\hat{Q}_{20}U_{20}^{\tau},  P_{10}^{\tau}=\hat{P}_{10}V_{10}^{\tau},  P_{20}^{\tau}=\hat{P}_{20}V_{20}^{\tau},
\end{equation}
we get the truncated initial matrices for large-scale SDA (SDA\_ls)
\begin{equation}\label{lsinit2}
    E_{0}^{\tau}=E_0, F_0^{\tau}=F_0, H_0^{\tau} = Q_{10}^{\tau} \Sigma_0^{\tau} Q_{20}^{\tau T},   G_0^{\tau} = P_{10}^{\tau} \Gamma_0^{\tau} P_{20}^{\tau T}.
\end{equation}

Then we compute
\begin{eqnarray}\label{lssigma}
% \nonumber to remove numbering (before each equation)
\nonumber  \breve{\Sigma}_k^{\tau}&=& \Sigma_k^{\tau}+\Sigma_k^{\tau} Q_{2k}^{\tau T} P_{1k}^{\tau}\Gamma_k^{\tau}(I- P_{2k}^{\tau T}H_k^{\tau}P_{1k}^{\tau}\Gamma_k^{\tau})^{-1} P_{2k}^{\tau T} Q_{1k}^{\tau}\Sigma_k^{\tau},\\
  \breve{\Gamma}_k^{\tau}&=& \Gamma_k^{\tau}+(I- \Gamma_k^{\tau} P_{2k}^{\tau T} H_k^{\tau}P_{1k}^{\tau})^{-1}\Gamma_k^{\tau}P_{2k}^{\tau T}H_k^{\tau}P_{1k}^{\tau}\Gamma_k^{\tau},
\end{eqnarray}
and \begin{eqnarray}
    % \nonumber to remove numbering (before each equation)
    \nonumber  F_{1,k+1}^{\tau}=F_k^{\tau} Q_{1k}^{\tau}(I- \Sigma_k^{\tau} Q_{2k}^{\tau T} G_k^{\tau}Q_{1k}^{\tau})^{-1}\Sigma_k^{\tau} Q_{2k}^{\tau T} P_{1k}^{\tau}\Gamma_k^{\tau}, & F_{2,k+1}^{\tau}=F_k^{\tau T} P_{2k}^{\tau},\\
      E_{1,k+1}^{\tau}=E_k^{\tau} P_{1k}^{\tau} (I- \Gamma_k^{\tau} P_{2k}^{\tau T} H_k^{\tau}P_{1k}^{\tau})^{-1}\Gamma_k^{\tau}P_{2k}^{\tau T}Q_{1k}^{\tau}\Sigma_k^{\tau}, &  E_{2,k+1}^{\tau}=E_k^{\tau T} Q_{2k}^{\tau}.
    \end{eqnarray}
    From the economic QR decompositions
     \begin{equation}\label{lsQkplus1}
     [Q_{1k}^{\tau},F_k^{\tau} Q_{1k}^{\tau}] = [Q_{1k}^{\tau},\hat{Q}_{1k}]\left[\begin{array}{lc}
     I & S_{1q}\\
     0 & R_{1q}
   \end{array}\right], [Q_{2k}^{\tau},E_k^{\tau} Q_{2k}^{\tau}] = [Q_{2k}^{\tau},\hat{Q}_{2k}]\left[\begin{array}{lc}
   I & S_{2q}\\
    0 & R_{2q}
   \end{array}\right],
\end{equation}
\begin{equation}\label{lsQkplus2}
[P_{1k}^{\tau},E_k^{\tau} P_{1k}^{\tau}] = [P_{1k}^{\tau},\hat{P}_{1k}]\left[\begin{array}{lc}
I & S_{1p}\\
0 & R_{1p}
\end{array}\right],  [P_{2k}^{\tau},F_k^{\tau} P_{2k}^{\tau}] = [P_{2k}^{\tau},\hat{P}_{2k}]\left[\begin{array}{lc}
I & S_{2p}\\
0 & R_{2p}
\end{array}\right],
\end{equation}
we compute
\begin{eqnarray}\label{lssigmhat}
% \nonumber to remove numbering (before each equation)
 \nonumber  \hat{\Sigma}_{k+1} &=& \left[\begin{array}{lc}
I & S_{1q}\\
0 & R_{1q}
\end{array}\right]\left[\begin{array}{lc}
\Sigma_k^{\tau} & 0\\
0 & \breve{\Sigma}_k^{\tau}
\end{array}\right] \left[\begin{array}{lc}
I & S_{2q}\\
0 & R_{2q}
\end{array}\right]^T \\
\hat{\Gamma}_{k+1}&=&\left[\begin{array}{lc}
I & S_{1p}\\
0 & R_{1p}
\end{array}\right]\left[\begin{array}{lc}
\Gamma_k^{\tau} & 0\\
0 & \breve{\Gamma}_k^{\tau}
\end{array}\right] \left[\begin{array}{lc}
I & S_{2p}\\
0 & R_{2p}
\end{array}\right]^T,
\end{eqnarray}
then we compute the SVDs
\begin{equation*}
% \nonumber to remove numbering (before each equation)
\nonumber   \hat{\Sigma}_{k+1}=\left[\begin{array}{lc}
 U_{1,k+1}^{\tau} & U_{1,k+1}^{\epsilon}
\end{array}\right](\Sigma_{k+1}^{\tau}\oplus \Sigma_{k+1}^{\epsilon})\left[\begin{array}{lc}
U_{2,k+1}^{\tau} & U_{2,k+1}^{\epsilon}\end{array}\right]^T
\end{equation*}
\begin{equation}\label{lssvd}
 \hat{\Gamma}_{k+1}=\left[\begin{array}{lc}
 V_{1,k+1}^{\tau} & V_{1,k+1}^{\epsilon}
\end{array}\right](\Gamma_{k+1}^{\tau}\oplus \Gamma_{k+1}^{\epsilon})\left[\begin{array}{lc}
V_{2,k+1}^{\tau}
& V_{2,k+1}^{\epsilon}\end{array}\right]^T
\end{equation}
with $\|\Sigma_{k+1}^{\epsilon}\|<\epsilon_{k+1}$ and $\|\Gamma_{k+1}^{\epsilon}\|<\epsilon_{k+1}$.
The truncated matrices are
\begin{eqnarray}\label{lsqpupdate}
% \nonumber to remove numbering (before each equation)
\nonumber Q_{1,k+1}^{\tau} = [Q_{1k}^{\tau},\hat{Q}_{1k}]U_{1,k+1}^{\tau} ,& Q_{2,k+1}^{\tau} = [Q_{2k}^{\tau},\hat{Q}_{2k}]U_{2,k+1}^{\tau} ,\\
  P_{1,k+1}^{\tau} = [P_{1k}^{\tau},\hat{P}_{1k}]V_{1,k+1}^{\tau} ,& P_{2,k+1}^{\tau} = [P_{2k}^{\tau},\hat{P}_{2k}]V_{2,k+1}^{\tau}.
\end{eqnarray}
We note that if QR decompositions and SVDs are not introduced to truncate $Q_{ik}, P_{ik} (i=1,2)$ in SDA\_ls, then SDA\_ls is mathematically equivalent to SDA \eqref{lsiteration-matrix} because \eqref{lssdait} are derived from \eqref{lsiteration-matrix} by applying SMW formula:
\begin{eqnarray*}
% \nonumber to remove numbering (before each equation)
  (I-H_k^{\tau}G_k^{\tau})^{-1} &=&I+Q_{1k}^{\tau}(I- \Sigma_k^{\tau} Q_{2k}^{\tau T} G_k^{\tau}Q_{1k}^{\tau})^{-1}\Sigma_k^{\tau} Q_{2k}^{\tau T} G_k^{\tau},\\
 (I-G_k^{\tau}H_k^{\tau})^{-1} &=&I+P_{1k}^{\tau} (I- \Gamma_k^{\tau} P_{2k}^{\tau T} H_k^{\tau}P_{1k}^{\tau})^{-1}\Gamma_k^{\tau}P_{2k}^{\tau T}H_k^{\tau}.
\end{eqnarray*}

\section{Balancing Strategy and Modified SDA\_ls}

In this section we introduce a balancing strategy, with
which the matrix $\widetilde{X}^T$  is the minimal positive solution
of the dual equation if $\widetilde{X}$ is the minimal
positive solution of the algebraic Riccati equation.

Because the entries of the vector $q$ are positive, we may define
\begin{eqnarray*}
\Phi := \mbox{diag}(\sqrt{q_1},\cdots,\sqrt{q_n}),    \phi :=
{(\sqrt{q_1},\cdots,\sqrt{q_n})}^T.
\end{eqnarray*}
Let
\begin{eqnarray*}
\tilde{X} &=& \Phi X\Phi,\\
\tilde{A} &=&\Phi A{\Phi}^{-1}=\Delta- \phi{\phi}^T,\\
\tilde{B} &=&\Phi B\Phi=\phi{\phi}^T,\\
\tilde{C} &= &{\Phi}^{-1} C{\Phi}^{-1}=\phi{\phi}^T=\tilde{B},\\
\tilde{E} &=& {\Phi}^{-1} E\Phi=D- \phi{\phi}^T.
\end{eqnarray*}
Then the algebraic Riccati equation of (\ref{lsriccati}) can be
equivalently expressed as
\begin{equation}\label{lsbalriccati}
\tilde{X}\tilde{C}\tilde{X}-\tilde{X}\tilde{E}-\tilde{A}\tilde{X}+\tilde{B}=0.
\end{equation}
Obviously, $X$ is a solution of (\ref{lsriccati}) if and only
if $\tilde{X} = \Phi X\Phi$ is a solution of
(\ref{lsbalriccati}).

Let
\begin{eqnarray*}
\tilde{K}&=&\left[\begin{array}{cc}
\tilde{E} & -\tilde{C} \\
-\tilde{B} & \tilde{A}
\end{array}\right]\\
 &= &\left[\begin{array}{cc}
{\Phi}^{-1}& 0 \\
0     & \Phi
\end{array}\right]K\left[\begin{array}{cc}
\Phi & 0 \\
0 & {\Phi}^{-1}
\end{array}\right].
\end{eqnarray*}

Then  $\tilde{K}$ is similar to $K$.

%Because the matrix $\tilde{K}$ is similar with the matrix $K$, the
%eigenvalues of the two matrices are equal. Applying SDA to the new
%Riccati equation $(3.1)$ is equal to applying SDA to the
%(NARE)(\ref{lsriccati}) in the quadratic convergence rate.

Choose $\tilde{\gamma}\geq$ max$\{ \tilde{e}_{ii}, \tilde{a}_{ii}:
i=1, \cdots, n \}$; let
\[
\tilde{W}=\tilde{A}+\tilde{\gamma}
I-\tilde{B}(\tilde{E}+\tilde{\gamma} I)^{-1}\tilde{C},
\tilde{V}=\tilde{E}+\tilde{\gamma}
I-\tilde{C}(\tilde{A}+\tilde{\gamma} I)^{-1}\tilde{B}
\]
and
\begin{eqnarray}\label{lsinitialmatrix}
\left.\begin{array}{cc}
\tilde{E}_0=I-2\tilde{\gamma} \tilde{V}^{-1}, &  \tilde{F}_0=I-2\tilde{\gamma} \tilde{W}^{-1}, \\
\tilde{G}_0=2\tilde{\gamma} (\tilde{E}+\tilde{\gamma}
I)^{-1}\tilde{C}\tilde{W}^{-1},& \tilde{H}_0=2\tilde{\gamma}
\tilde{W}^{-1}\tilde{B}(\tilde{E}+\tilde{\gamma} I)^{-1},
\end{array}\right.
\end{eqnarray}
where $\widetilde{A},\widetilde{B},\widetilde{C}$ and
$\widetilde{E}$ are coefficient matrices of
(\ref{lsbalriccati}) and $\tilde{e}_{ii}$ and $\tilde{a}_{ii}$
are the $i-th$ diagonal elements of the matrices $\tilde{E}$ and
$\tilde{A}$, respectively..

For $k\geq0$, calculate
\begin{eqnarray}\label{lsiterationmatrix}
\left.\begin{array}{ccl}
\tilde{E}_{k+1}&=&\tilde{E}_k(I-\tilde{G}_k\tilde{H}_k)^{-1}\tilde{E}_k, \\
\tilde{F}_{k+1}&=&\tilde{F}_k(I-\tilde{H}_k\tilde{G}_k)^{-1}\tilde{F}_k, \\
\tilde{G}_{k+1}&=&\tilde{G}_k+\tilde{E}_k(I-\tilde{G}_k\tilde{H}_k)^{-1}\tilde{G}_k
\tilde{F}_k, \\
\tilde{H}_{k+1}&=&\tilde{H}_k+\tilde{F}_k(I-\tilde{H}_k\tilde{G}_k)^{-1}\tilde{H}_k\tilde{E}_k.
\end{array}\right.
\end{eqnarray}

For the initial matrices of SDA\_ls for NARE \eqref{lsbalriccati},  if we set
\begin{equation*}
    E_0=I-2\tilde{\gamma} \tilde{V}^{-1},   F_0=I-2\tilde{\gamma} \tilde{W}^{-1},  H_0=Q_{10}\Sigma_0 Q_{20}^T, G_0=P_{10}\Gamma_0P_{20}^T,
\end{equation*}
where
\begin{eqnarray*}
% \nonumber to remove numbering (before each equation)
  Q_{10} \equiv  \sqrt{2 \tilde{\gamma}} \tilde{W}^{-1}\phi,& Q_{20} \equiv \sqrt{2 \tilde{\gamma}} (\tilde{E}+\tilde{\gamma} I)^{-T}\phi,& \Sigma_0\equiv I,\\
  P_{10} \equiv \sqrt{2 \tilde{\gamma}} (\tilde{E}+\tilde{\gamma} I)^{-1}\phi,& P_{20} \equiv \sqrt{2  \tilde{\gamma}}\tilde{W}^{-T}\phi,& \Gamma_0\equiv I,
\end{eqnarray*}
then the flop operations of large scale structure preserving doubling algorithm can be reduced by half. We show this in Theorem \ref{lsthm1}.

\begin{theorem} \label{lsthm1}
For the initial matrices of SDA\_ls applied to NARE \eqref{lsbalriccati},  if
\begin{equation*}
    E_0=I-2\tilde{\gamma} \tilde{V}^{-1},   F_0=I-2\tilde{\gamma} \tilde{W}^{-1},  H_0=Q_{10}\Sigma_0 Q_{20}^T, G_0=P_{10}\Gamma_0P_{20}^T,
\end{equation*}
where
\begin{eqnarray}\label{lsmodinitial}
% \nonumber to remove numbering (before each equation)
  Q_{10} \equiv  \sqrt{2 \tilde{\gamma}} \tilde{W}^{-1}\phi,& Q_{20} \equiv \sqrt{2 \tilde{\gamma}} (\tilde{E}+\tilde{\gamma} I)^{-T}\phi,& \Sigma_0\equiv I,\\
  P_{10} \equiv \sqrt{2 \tilde{\gamma}} (\tilde{E}+\tilde{\gamma} I)^{-1}\phi,& P_{20} \equiv \sqrt{2  \tilde{\gamma}} \tilde{W}^{-T}\phi,& \Gamma_0\equiv I,
\end{eqnarray}
then for SDA\_ls
\begin{eqnarray}
% \nonumber to remove numbering (before each equation)
  Q_{1k}^{\tau}= P_{2k}^{\tau}, & Q_{2k}^{\tau}=P_{1k}^{\tau},& \Sigma_k^{\tau}=\Gamma_k^{\tau},  \\
  E_k^{\tau} = E_k^{\tau T} & F_k^{\tau} = F_k^{\tau T}, & H_k^{\tau} = G_k^{\tau T},
\end{eqnarray}
\begin{equation}
    E_k^{\tau} P_{1k}^{\tau}=E_k^{\tau T} Q_{2k}^{\tau},  F_k^{\tau} Q_{1k}^{\tau}=F_k^{\tau T} P_{2k}^{\tau}
\end{equation}
hold true for $k=1,2,\cdots$.

\end{theorem}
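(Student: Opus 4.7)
My plan is a straightforward induction on $k$, exploiting the fact that after balancing the coefficient matrices become symmetric with $\tilde B=\tilde C=\phi\phi^T$, and the only nontrivial work is to check that the QR and SVD steps inside SDA\_ls preserve the pairing between the $Q$- and $P$-factors.

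\textbf{Base case ($k=0$).} Because $\tilde A=\Delta-\phi\phi^T$ and $\tilde E=D-\phi\phi^T$ are symmetric and $\tilde B=\tilde C=\phi\phi^T$, the Schur-complement-like matrices $\tilde W$ and $\tilde V$ are symmetric, hence so are $\tilde E_0$ and $\tilde F_0$. A direct computation gives $\tilde H_0=\tilde G_0^T$, and substituting $\tilde B=\tilde C=\phi\phi^T$ into the definitions in \eqref{lsmodinitial} yields the identifications $Q_{10}=P_{20}$, $Q_{20}=P_{10}$ and $\Sigma_0=\Gamma_0=I$. The economic QR decompositions in \eqref{lsqrini} can therefore be chosen consistently so that $\hat Q_{10}=\hat P_{20}$, $\hat Q_{20}=\hat P_{10}$ and $R_{1q}=R_{2p}$, $R_{2q}=R_{1p}$. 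Then $R_{1p}\Gamma_0R_{2p}^T$ is just the transpose of $R_{1q}\Sigma_0R_{2q}^T$, so their SVDs in \eqref{lssvdini} can be paired by swapping the left and right singular vectors; this gives $U_{10}^{\tau}=V_{20}^{\tau}$, $U_{20}^{\tau}=V_{10}^{\tau}$ and $\Sigma_0^{\tau}=\Gamma_0^{\tau}$, which together with \eqref{lsinit} yields $Q_{10}^{\tau}=P_{20}^{\tau}$, $Q_{20}^{\tau}=P_{10}^{\tau}$ and the required symmetries of $E_0^{\tau}$, $F_0^{\tau}$, and $H_0^{\tau}=G_0^{\tau T}$.

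\textbf{Inductive step.} Assume all the claimed identities hold at step $k$. Substituting $Q_{1k}^{\tau}=P_{2k}^{\tau}$, $Q_{2k}^{\tau}=P_{1k}^{\tau}$ and $\Sigma_k^{\tau}=\Gamma_k^{\tau}$ into the expressions \eqref{lssigma}, a direct calculation shows $\breve\Sigma_k^{\tau}=\breve\Gamma_k^{\tau}$ (after using $H_k^{\tau}=G_k^{\tau T}$ to rewrite the inner scalar factor $\Gamma_k^{\tau}P_{2k}^{\tau T}H_k^{\tau}P_{1k}^{\tau}=\Sigma_k^{\tau}Q_{2k}^{\tau T}G_k^{\tau}Q_{1k}^{\tau}\cdot(\,\cdot\,)^T$-type manipulations from the two Neumann-style formulas). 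The auxiliary identities $E_k^{\tau}P_{1k}^{\tau}=E_k^{\tau T}Q_{2k}^{\tau}$ and $F_k^{\tau}Q_{1k}^{\tau}=F_k^{\tau T}P_{2k}^{\tau}$ are immediate from $E_k^{\tau}=E_k^{\tau T}$, $F_k^{\tau}=F_k^{\tau T}$ and the pairing of the orthogonal factors. Plugging these into the formulas for $F_{1,k+1}^{\tau},F_{2,k+1}^{\tau},E_{1,k+1}^{\tau},E_{2,k+1}^{\tau}$ shows that $\hat P_{2,k+1}$ and $\hat Q_{1,k+1}$ (and similarly $\hat P_{1,k+1}$ and $\hat Q_{2,k+1}$) come from QR factorisations of the same augmented block matrix, so they can be taken equal, and the corresponding triangular factors $R_{*q},R_{*p}$ and $S_{*q},S_{*p}$ coincide pairwise. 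Consequently $\hat\Gamma_{k+1}=\hat\Sigma_{k+1}^T$ in \eqref{lssigmhat}, and the same SVD argument as in the base case propagates all the symmetries to step $k+1$.

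\textbf{Main obstacle.} The routine algebra of plugging the inductive hypothesis into \eqref{lssigma}--\eqref{lsqpupdate} is unenlightening; the genuinely delicate point is that QR and SVD factorisations are not unique, so one has to argue that the two SVDs in \eqref{lssvd} can be \emph{chosen} so that their singular vectors are swapped, and similarly for the QR decompositions. This is the only place where the proof is not purely formal: one must either invoke uniqueness up to sign (for simple singular values) or, more robustly, observe that whenever $M^T$ admits an SVD $U\Sigma V^T$, then $M$ admits the SVD $V\Sigma U^T$, so a consistent choice is always available. Once this bookkeeping is done, the remainder of the argument is a mechanical verification driven by $\tilde B=\tilde C$.
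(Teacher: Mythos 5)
Your proposal is correct and follows essentially the same route as the paper's proof: induction on $k$, with the base case coming from the symmetry $\tilde A^T=\tilde A$, $\tilde E^T=\tilde E$, $\tilde B=\tilde C=\phi\phi^T$ (hence $\tilde W^T=\tilde W$, $\tilde V^T=\tilde V$, $H_0=G_0^T$, $Q_{10}=P_{20}$, $Q_{20}=P_{10}$), and the inductive step propagating the pairings through \eqref{lssigma}--\eqref{lsqpupdate} exactly as the paper does. Your explicit remark that the QR and SVD factorizations must be \emph{chosen} consistently (uniqueness of QR with positive diagonal, and taking the SVD of $M^T$ as $V\Sigma U^T$ when $M=U\Sigma V^T$) addresses a point the paper silently glosses over, and is a worthwhile refinement rather than a deviation.
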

\begin{proof}
We prove the result by mathematical induction.

After being balanced, Since $\tilde{B}=\tilde{C}$, $\tilde{A}^T=\tilde{A}$ and
$\tilde{E}^T=\tilde{E}$, we have
\[
\tilde{W}^T=\tilde{W},  \tilde{V}^T=\tilde{V},
\]
and, thus,
\begin{equation*}
  E_0^T=E_0, F_0^T=F_0, H_{0}=G_{0}^T.
\end{equation*}

From \begin{eqnarray*}
% \nonumber to remove numbering (before each equation)
 Q_{10} \equiv  \sqrt{2 \tilde{\gamma}} \tilde{W}^{-1}\phi,& Q_{20} \equiv \sqrt{2 \tilde{\gamma}} (\tilde{E}+\tilde{\gamma} I)^{-T}\phi,& \Sigma_0\equiv I,\\
  P_{10} \equiv \sqrt{2 \tilde{\gamma}} (\tilde{E}+\tilde{\gamma} I)^{-1}\phi,& P_{20} \equiv \sqrt{2  \tilde{\gamma}} W^{-T}\phi,& \Gamma_0\equiv I,
\end{eqnarray*}
we get
\begin{equation*}
    Q_{10}= P_{20},  Q_{20}=P_{10}, \Sigma_0=\Gamma_0^T.
\end{equation*}
Then from the initial process of SDA\_ls, i.e. \eqref{lsqrini},\eqref{lssvdini}, \eqref{lsinit}, \eqref{lsinit2}, we can verify that
\begin{eqnarray*}
% \nonumber to remove numbering (before each equation)
  Q_{10}^{\tau}= P_{20}^{\tau}, & Q_{20}^{\tau}=P_{10}^{\tau},& \Sigma_0^{\tau}=\Gamma_0^{\tau T},  \\
  E_0^{\tau} = E_0^{\tau T} & F_0^{\tau} = F_0^{\tau T}, & H_0^{\tau} = G_0^{\tau T}, \\
\end{eqnarray*}
and
\begin{equation*}
    E_0^{\tau} P_{10}^{\tau}=E_0^{\tau T} Q_{20}^{\tau}, F_0^{\tau} Q_{10}^{\tau}=F_0^{\tau T} P_{20}^{\tau}.
\end{equation*}

Hence the result is true for $k=0$. Assume that the result holds true for $k=l$, and we consider the case of $k=l+1$.
Note that at each local step of SDA\_ls, the local truncation is done only on matrices $Q_{ik}$ and $P_{ik}$ (i=1,2), so
the following iteration processes of SDA\_ls
\begin{equation*}
    E_{l+1}^{\tau} = E_{l}^{\tau 2}+E_{1,l+1}^{\tau}E_{2,l+1}^{\tau T},  F_{l+1}^{\tau} = F_{l}^{\tau 2}+F_{1,l+1}^{\tau}F_{2,l+1}^{\tau T},
\end{equation*}
are mathematically equivalent to applying SMW formula to
\begin{equation}\label{lssdaefit}
    E_{l+1}^{\tau}=E_l^{\tau}(I-G_l^{\tau}H_l^{\tau})^{-1}E_l^{\tau},
F_{l+1}^{\tau}=F_l^{\tau}(I-H_lG_l^{\tau})^{-1}F_l).
\end{equation}

From the assumption of induction we know
\begin{equation*}
   E_{l}^{\tau}=E_{l}^{\tau T}, \quad F_{l}^{\tau}=F_{l}^{\tau T} ,\quad  H_{l}^{\tau}=G_{l}^{\tau T} ,
\end{equation*}
so from \eqref{lssdaefit} we get
\begin{equation*}
   E_{l+1}^{\tau}=E_{l+1}^{\tau T},  F_{l+1}^{\tau}=F_{l+1}^{\tau T} .
\end{equation*}
By direct calculations we can verify that
\begin{equation*}
    \breve{\Sigma}_{l}^{\tau}=\breve{\Gamma}_l^{\tau T}
\end{equation*}
holds in the iteration step \eqref{lssigma} and we have
\begin{equation*}
    [Q_{1l}^{\tau},F_l^{\tau} Q_{1l}^{\tau}] =[P_{2l}^{\tau},F_l^{\tau} P_{2l}^{\tau}], [Q_{2l}^{\tau},E_l^{\tau} Q_{2l}^{\tau}]= [P_{1l}^{\tau},E_l^{\tau} P_{1l}^{\tau}]
\end{equation*}
in iteration step \eqref{lsQkplus1} because of assumption of induction, so we get
\begin{equation*}
     \hat{\Sigma}_{l+1}=   \hat{\Gamma}_{l+1}^T
\end{equation*}
in the iteration step \eqref{lssigmhat}. Then from \eqref{lssvd} we get
\begin{equation*}
    \Sigma_{l+1}^{\tau}=\Gamma_{l+1}^{\tau T}
\end{equation*}
and from \eqref{lsqpupdate}
we get
\begin{equation*}
    Q_{1,l+1}^{\tau} =P_{2,l+1}^{\tau} , Q_{2,l+1}^{\tau}= P_{1,l+1}^{\tau},
\end{equation*}
and thus
\begin{equation*}
    H_{l+1}^{\tau}=G_{l+1}^{\tau T}
\end{equation*}

Combining with
\begin{equation*}
   E_{l+1}^{\tau}=E_{l+1}^{\tau T},  F_{l+1}^{\tau}=F_{l+1}^{\tau T} ,
\end{equation*}
we see that
\begin{equation*}
    E_{l+1}^{\tau} P_{1,l+1}^{\tau}=E_{l+1}^{\tau T} Q_{2,l+1}^{\tau}, F_{l+1}^{\tau} Q_{1,l+1}^{\tau}=F_{l+1}^{\tau T} P_{2,l+1}^{\tau}.
\end{equation*}
So we know that the conclusion holds true for $k=l+1$. By induction we have proved the result.
\end{proof}

Based on the above discussion, we present the modified SDA\_ls algorithm
for NARE (\ref{lsriccati}) as follows.

\vspace{3mm}

\textbf{Modified SDA\_ls for NARE (\ref{lsriccati})}

\begin{itemize}
\item[1.] Set $\phi :={(\sqrt{q_1},\cdots,\sqrt{q_n})}^T$ and let
\[
A=\Delta- \phi{\phi}^T, B=\phi{\phi}^T, C= \phi{\phi}^T, E= D-
\phi{\phi}^T.
\]

\item[2.]$k: =0$, compute
$W^{-1}, V^{-1}$
and $E_0, F_0$ implicitly and set
\begin{equation*}
% \nonumber to remove numbering (before each equation)
  Q_{10} \equiv  \sqrt{2 \gamma} W^{-1}\phi, Q_{20} \equiv \sqrt{2 \gamma} (E+\gamma)^{-T}\phi, \Sigma_0\equiv I.
\end{equation*}
Compute economic QR decompositions
\begin{equation*}
    Q_{10}=\hat{Q}_{10} R_{1q}, Q_{20}=\hat{Q}_{20} R_{2q}
\end{equation*}
and compute the SVDs
\begin{equation*}
% \nonumber to remove numbering (before each equation)
  R_{1q}\Sigma_0 R_{2q}^T=[U_{10}^{\tau}, U_{10}^{\epsilon}](\Sigma_0^{\tau}\oplus \Sigma_0^{\epsilon})[U_{20}^{\tau}, U_{20}^{\epsilon}]^T ,\|\Sigma_0^{\epsilon}\|<\epsilon_0.
\end{equation*}
Set
\begin{equation*}
    Q_{10}^{\tau}=\hat{Q}_{10}U_{10}^{\tau},  Q_{20}^{\tau}=\hat{Q}_{20}U_{20}^{\tau},
\end{equation*}
\begin{equation*}
    E_{0}^{\tau}=E_0, F_0^{\tau}=F_0, H_0^{\tau} = Q_{10}^{\tau} \Sigma_0^{\tau} Q_{20}^{\tau T}.
\end{equation*}

\item[3.] For $k\geq 0$, calculate
\begin{equation*}
 \breve{\Sigma}_k^{\tau}=\Sigma_k^{\tau}+\Sigma_k^{\tau} Q_{2k}^{\tau T} Q_{2k}^{\tau}\Gamma_k^{\tau}(I- Q_{1k}^{\tau T}H_k^{\tau}Q_{2k}^{\tau}\Gamma_k^{\tau})^{-1} Q_{1k}^{\tau T} Q_{1k}^{\tau}\Sigma_k^{\tau},
\end{equation*}
and \begin{eqnarray*}
    % \nonumber to remove numbering (before each equation)
   F_{2,k+1}^{\tau}=F_k^{\tau} Q_{1k}^{\tau}, &  F_{1,k+1}^{\tau}=F_{2,k+1}^{\tau}(I- \Gamma_k^{\tau} Q_{2k}^{\tau T} H_k^{\tau T}Q_{1k}^{\tau})^{-1}\Sigma_k^{\tau} Q_{2k}^{\tau T} P_{1k}^{\tau}\Gamma_k^{\tau}, \\
    E_{2,k+1}^{\tau}=E_k^{\tau} Q_{2k}^{\tau}, &
      E_{1,k+1}^{\tau}= E_{2,k+1}^{\tau} (I- \Gamma_k^{\tau} Q_{1k}^{\tau T} H_k^{\tau}P_{1k}^{\tau})^{-1}\Gamma_k^{\tau}Q_{1k}^{\tau T}Q_{1k}^{\tau}\Sigma_k^{\tau}.
    \end{eqnarray*}
    Compute economic QR decompositions
     \begin{equation*}
     [Q_{1k}^{\tau},F_k^{\tau} Q_{1k}^{\tau}] = [Q_{1k}^{\tau},\hat{Q}_{1k}]\left[\begin{array}{lc}
     I & S_{1q}\\
     0 & R_{1q}
   \end{array}\right], [Q_{2k}^{\tau},E_k^{\tau} Q_{2k}^{\tau}] = [Q_{2k}^{\tau},\hat{Q}_{2k}]\left[\begin{array}{lc}
   I & S_{2q}\\
    0 & R_{2q}
   \end{array}\right],
\end{equation*}
and
\begin{equation*}
% \nonumber to remove numbering (before each equation)
 \hat{\Sigma}_{k+1} = \left[\begin{array}{lc}
I & S_{1q}\\
0 & R_{1q}
\end{array}\right]\left[\begin{array}{lc}
\Sigma_k^{\tau} & 0\\
0 & \breve{\Sigma}_k^{\tau}
\end{array}\right] \left[\begin{array}{lc}

I & S_{2q}\\
0 & R_{2q}
\end{array}\right]^T.
\end{equation*}
Then compute the SVDs
\begin{equation*}
% \nonumber to remove numbering (before each equation)
  \hat{\Sigma}_{k+1}=\left[\begin{array}{lc}
 U_{1,k+1}^{\tau} & U_{1,k+1}^{\epsilon}
\end{array}\right](\Sigma_{k+1}^{\tau}\oplus \Sigma_{k+1}^{\epsilon})\left[\begin{array}{lc}
U_{2,k+1}^{\tau}, U_{2,k+1}^{\epsilon}\end{array}\right]^T
\end{equation*}
with $\|\Sigma_{k+1}^{\epsilon}\|<\epsilon_{k+1}$
and the truncated matrices
\begin{equation*}
% \nonumber to remove numbering (before each equation)
 Q_{1,k+1}^{\tau} = [Q_{1k}^{\tau},\hat{Q}_{1k}]U_{1,k+1}^{\tau} , Q_{2,k+1}^{\tau} = [Q_{2k}^{\tau},\hat{Q}_{2k}]U_{2,k+1}^{\tau} ,
\end{equation*}
and form
\begin{equation*}
     E_k^{\tau} = E_{k-1}^{\tau 2}+E_{1k}^{\tau}E_{2k}^{\tau T},  F_k^{\tau} = F_{k-1}^{\tau 2}+F_{1k}^{\tau}F_{2k}^{\tau T}
\end{equation*}
implicitly.

\item[4.] If the stoping criterion is satisfied, then return
\begin{equation*}
    X=(\Phi ^{-1} Q_{1k}^{\tau}) \Sigma_k^{\tau} (Q_{2k}^{\tau T}
\Phi ^{-1}),
\end{equation*}
 where
${\phi}^{-1}={(\frac{1}{\sqrt{q_1}},\cdots,\frac{1}{\sqrt{q_n}})}^T$;
else go to step 3.
\end{itemize}
We compare the operation counts for the kth iteration in SDA\_ls and modified SDA\_ls. From Table 1 and Table 2, we see clearly the flops of modified SDA\_ls are roughly half of that of SDA\_ls.

\begin{table}\label{lstab1}
\begin{center}
\caption{Flop operations for the kth iteration in SDA\_ls}
\begin{tabular}{|l|c|}
  \hline
  % after \\: \hline or \cline{col1-col2} \cline{col3-col4} ...
  Computation & Flops \\
  \hline
  $\breve{\Sigma}_k^{\tau},  \breve{\Gamma}_k^{\tau}$& $4m_km_kn$ \\
  $E_k^{\tau} P_{1k}^{\tau}, E_k^{\tau T} Q_{2k}^{\tau}$& $2[2^k(c_{\gamma}+4\sum_{j=1}^{k} 2^{-i}m_j] m_kn$ \\
  $F_k^{\tau} Q_{1k}^{\tau}, F_k^{\tau T} P_{2k}^{\tau}$& $2[2^k(c_{\gamma}+4\sum_{j=1}^{k} 2^{-i}m_j] m_kn$ \\
 $ F_{1,k+1}^{\tau}, E_{1,k+1}^{\tau}$ & $4m_km_kn$\\
 Orthogonalize $F_k^{\tau} Q_{1k}^{\tau}, F_k^{\tau T} P_{2k}^{\tau},E_k^{\tau} P_{1k}^{\tau}, E_k^{\tau T} Q_{2k}^{\tau}$ & $2[6(m_k^2+m_k^2)+2m_k]n$ \\
   $Q_{i,k+1}^{\tau},  P_{i,k+1}^{\tau}  (i=1,2)$& $16(m_km_{k+1})n$ \\
  \hline

\end{tabular}
\end{center}
\end{table}
\begin{table}\label{lstab2}
\begin{center}
\caption{Flop operations for the kth iteration in modified SDA\_ls}
\begin{tabular}{|l|c|}
  \hline
  % after \\: \hline or \cline{col1-col2} \cline{col3-col4} ...
  Computation & Flops \\
  \hline
  $\breve{\Sigma}_k^{\tau}$ & $2m_km_kn$ \\
  $ E_k^{\tau} Q_{2k}^{\tau}$& $[2^k(c_{\gamma}+4\sum_{j=1}^{k} 2^{-i}m_j] m_kn$ \\
  $F_k^{\tau} Q_{1k}^{\tau}$& $[2^k(c_{\gamma}+4\sum_{j=1}^{k} 2^{-i}m_j] m_kn$ \\
 $ F_{1,k+1}^{\tau}, E_{1,k+1}^{\tau}$ & $4m_km_kn$\\
 Orthogonalize $F_k^{\tau} Q_{1k}^{\tau},  E_k^{\tau} Q_{2k}^{\tau}$ & $[6(m_k^2+m_k^2)+2m_k]n$ \\
   $Q_{i,k+1}^{\tau}  (i=1,2)$ & $8(m_km_{k+1})n$ \\
  \hline

\end{tabular}
\end{center}
\end{table}

\section{Numerical Experiments}

\section{Conclusions}
We have presented a balancing strategy and specially chosen initial matrices for the large-scale structure-preserving doubling algorithm applied to NARE \eqref{lsriccati}. We prove the flop operations of SDA\_ls at each iteration step can be reduced by half for this special equation. Computationally it is an interesting scheme that helps reducing flop operations and
computing the solution faster. Numerical experiments show that modified SDA\_ls for NARE \eqref{lsriccati} is very effective and outperforms SDA\_ls.

\end{document}